\def\bfabs{\par\vskip 4pt \noindent}
\newtheorem{theorem}{Theorem}[section]
\newtheorem{lemma}{Lemma}[section]
\newtheorem{proposition}{Proposition}[section]
\theoremstyle{remark}
\newtheorem{remark}{Remark}[section]
\begin{document}

\title[An optimal inequality on centroaffine
hypersurfaces] {An optimal inequality on locally strongly convex
centroaffine hypersurfaces}

\author{Xiuxiu Cheng and Zejun Hu}
\address{%
School of Mathematics and Statistics, Zhengzhou University,
Zhengzhou 450001, People's Republic of China.
\newline \indent{\it E-mail
addresses}: {\rm chengxiuxiu1988@163.com; huzj@zzu.edu.cn}}

\thanks{2010 {\it
Mathematics Subject Classification.} \ Primary 53A15; Secondary
53C24, 53C42.}

\thanks{This project was supported by grants of NSFC-11371330.}

\date{}

\keywords{Centroaffine hypersurface, locally strongly convex,
difference tensor, Tchebychev vector field, parallel cubic form.}

\begin{abstract}
In this paper, we establish a general inequality for locally
strongly convex centroaffine hypersurfaces in $\mathbb{R}^{n+1}$
involving the norm of the covariant derivatives of both the
difference tensor $K$ and the Tchebychev vector field $T$. Our
result is optimal in that, applying our recent classification for
locally strongly convex centroaffine hypersurfaces with parallel
cubic form in \cite{CHM}, we can completely classify the
hypersurfaces which realize the equality case of the inequality.
\end{abstract}

\maketitle

\numberwithin{equation}{section}
\section{Introduction}\label{sect:1}

Let $\mathbb{R}^{n+1}$ be the $(n+1)$-dimensional affine space
equipped with its canonical flat connection $D$ and the parallel
volume form det. In this paper, we show that for locally strongly
convex centroaffine hypersurfaces in $\mathbb{R}^{n+1}$ there is an
optimal inequality involving centroaffine invariants.

Recall that in centroaffine differential geometry, we study
properties of hypersurfaces in $\mathbb{R}^{n+1}$ that are invariant
under the centroaffine transformation group $G$ in
$\mathbb{R}^{n+1}$. Here, by definition, $G$ is the subgroup of
affine transformation group in $\mathbb{R}^{n+1}$ which keeps the
origin $O\in \mathbb{R}^{n+1}$ invariant. Let $M^n$ be an
$n$-dimensional smooth manifold. An immersion $x: M^n\rightarrow
\mathbb{R}^{n+1}$ is said to be a centroaffine hypersurface if, for
each point $x\in M^n$, the position vector $x$ (from $O$) is
transversal to the tangent space $T_xM$ of $M$ at $x$. In that
situation, the position vector $x$ defines the {\it centroaffine
normalization} modulo orientation. For any vector fields $X$ and $Y$
tangent to $M^n$, we have the centroaffine formula of Gauss:
\begin{equation}\label{eqn:1.1}
D_{X}x_{*}(Y)=x_{*}(\nabla_{X}Y) + h(X,Y)(-\varepsilon x) ,
\end{equation}
where $\varepsilon=1$ or $-1$. Moreover, associated with
\eqref{eqn:1.1} we will call $-\varepsilon x$, $\nabla$ and $h$ the
centroaffine normal, the induced (centroaffine) connection and the
centroaffine metric, respectively. In this paper, we will consider
only locally strongly convex centroaffine hypersurfaces such that
the bilinear $2$-form $h$ defined by \eqref{eqn:1.1} remains
definite; then we will choose $\varepsilon$ such that the
centroaffine metric $h$ is positive definite.

Let $x: M^n\rightarrow \mathbb{R}^{n+1}$ be a locally strongly
convex centroaffine hypersurface and $\hat\nabla$ be the Levi-Civita
connection of its centroaffine metrc $h$. Then its difference tensor
$K$ is defined by $K(X,Y):=K_XY:=\nabla_XY-\hat{\nabla}_XY$; it is
symmetric as both connections are torsion free. Define the cubic
form $C$ by $C:=\nabla h$; it is related to the difference tensor by
the equation
\begin{equation}\label{eqn:1.2}
C(X,Y,Z):=(\nabla_Xh)(Y,Z)=-2h(K_XY,Z).
\end{equation}
It follows that $C$ is a totally symmetric tensor of type $(0,3)$,
and that $\hat{\nabla}K=0$ is equivalent to $\hat{\nabla}C=0$. Now,
we define the Tchebychev form $T^\sharp$ and its associated
Tchebychev vector field $T$ such that:
\begin{equation}\label{eqn:1.3}
nT^\sharp(X)={\rm trace}\,(K_{X}),\ \ h(T, X)=T^\sharp(X).
\end{equation}
If $T=0$, or equivalently, ${\rm trace}\,K_X=0$ for any tangent
vector $X$, then $M^n$ reduced to be the so-called {\it proper
(equi-)affine hypersphere} centered at the origin $O$ (cf. p.279 of
\cite{LSZH}, or see Section 1.15.2-3 therein for more details).
Using the difference tensor $K$ and the Tchebychev vector field $T$
one can define a traceless difference tensor $\tilde{K}$ by
\begin{equation}\label{eqn:1.4}
\tilde{K}(X,Y):=K(X,Y)-\tfrac{n}{n+2}\big[h(X,Y)T+h(X,T)Y
+h(Y,T)X\big].
\end{equation}

It is well-known that $\tilde{K}$ vanishes if and only if $x(M^n)$
lies in a hyperquadric (cf. Section 7.1 in \cite{SSV}; Lemma 2.1 and
Remark 2.2 in \cite{LLSSW}; refer also to \cite{C} and its
reviewer's comments in MR2155181).

Now, we can state the main result of this paper as follows:

\begin{theorem}\label{thm:1.1}
Let $x: M^n\rightarrow \mathbb{R}^{n+1}$ be a locally strongly
convex centroaffine hypersurface. Then the difference tensor $K$ and
the Tchebychev vector field $T$ of $M^n$ satisfy the following
inequality
\begin{equation}\label{eqn:1.5}
\|\hat{\nabla}K\|^2\geq \tfrac{3n^2}{n+2}\|\hat{\nabla}T\|^2,
\end{equation}
where $\|\cdot\|$ denotes the tensorial norm with respect to the
centroaffine metric $h$. Moreover, the equality holds at every point
of $M^n$ if and only if $\hat{\nabla}\tilde{K}=0$, and one of the
following cases occurs:
\begin{enumerate}
\item[(i)] $x(M^n)$ is an open part of a locally strongly
convex hyperquadric; or
\vskip 1mm
\item[(ii)] $x(M^n)$ is obtained as the (generalized) Calabi product of a
lower dimensional locally strongly convex centroaffine hypersurface
with parallel cubic form and a point; or
\vskip 1mm
\item[(iii)] $x(M^n)$ is obtained as the (generalized) Calabi product of two
lower dimensional locally strongly convex centroaffine hypersurfaces
with parallel cubic form; or
\vskip 1mm
\item[(iv)] $n=\tfrac{1}{2}m(m+1)-1,\ m\ge3$, $x(M^n)$ is centroaffinely equivalent to the standard
embedding of
$\mathrm{SL}(m,\mathbb{R})/\mathrm{SO}(m)\hookrightarrow
\mathbb{R}^{n+1}$;
or
\vskip 1mm
\item[(v)] $n=\tfrac{1}{4}(m+1)^2-1,\ m\ge5$,
$x(M^n)$ is centroaffinely equivalent to the standard embedding
$\mathrm{SL}(\tfrac{m+1}{2},\mathbb{C})/\mathrm{SU}(\tfrac{m+1}{2})
\hookrightarrow\mathbb{R}^{n+1}$;
or
\vskip 1mm
\item[(vi)] $n=\tfrac{1}{8}(m+1)(m+3)-1,\ m\ge9$,
$x(M^n)$ is centroaffinely equivalent to the standard embedding
$\mathrm{SU}^*(\tfrac{m+3}{2})/\mathrm{Sp}(\tfrac{m+3}{4})\hookrightarrow
\mathbb{R}^{n+1}$;
or
\vskip 1mm
\item[(vii)] $n=26$,
$x(M^n)$ is centroaffinely equivalent to the standard embedding\\
$\mathrm{E}_{6(-26)}/\mathrm{F}_4\hookrightarrow\mathbb{R}^{27}$; or
\vskip 1mm
\item[(viii)] $x(M^n)$ is locally centroaffinely equivalent to the
canonical centroaffine hypersurface
$x_{n+1}=\tfrac{1}{2x_{1}}\sum_{k=2}^{n}x_{k}^{2} + x_{1}\ln x_{1}$.
\end{enumerate}

\end{theorem}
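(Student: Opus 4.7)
The strategy is to decompose $\hat{\nabla}K$ orthogonally into the piece $\hat{\nabla}\tilde{K}$, which will control the equality case, and a piece $B$ built purely from $\hat{\nabla}T$; the inequality then follows from a Pythagorean identity, and the classification of equality reduces to identifying all locally strongly convex centroaffine hypersurfaces with $\hat{\nabla}\tilde{K}=0$ via the machinery of \cite{CHM}.

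I would first differentiate \eqref{eqn:1.4}. Since $\hat{\nabla}$ is the Levi-Civita connection of $h$, setting $S_Z:=\hat{\nabla}_Z T$ yields
\begin{equation*}
(\hat{\nabla}_ZK)(X,Y)=(\hat{\nabla}_Z\tilde{K})(X,Y)+\tfrac{n}{n+2}\bigl[h(X,Y)S_Z+h(X,S_Z)Y+h(Y,S_Z)X\bigr],
\end{equation*}
which I write as $\hat{\nabla}K=\hat{\nabla}\tilde{K}+B$. A direct contraction using \eqref{eqn:1.3} and \eqref{eqn:1.4} gives the two trace identities $\sum_i\tilde{K}(e_i,e_i)=0$ and ${\rm trace}\,\tilde{K}_X=0$ for all $X$; since $h$ is $\hat{\nabla}$-parallel, both pass to $\hat{\nabla}_Z\tilde{K}$. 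With these in hand, a direct orthonormal-frame computation shows that $\langle\hat{\nabla}\tilde{K},B\rangle=0$, and expanding $\|B\|^2$ (three $\|S_\cdot\|^2$-contributions totalling $3n\|\hat{\nabla}T\|^2$, plus cross terms totalling $6\|\hat{\nabla}T\|^2$, yielding the factor $3(n+2)$) produces $\|B\|^2=\tfrac{3n^2}{n+2}\|\hat{\nabla}T\|^2$. The Pythagorean identity
\begin{equation*}
\|\hat{\nabla}K\|^2=\|\hat{\nabla}\tilde{K}\|^2+\tfrac{3n^2}{n+2}\|\hat{\nabla}T\|^2
\end{equation*}
then proves \eqref{eqn:1.5}, with pointwise equality equivalent to $\hat{\nabla}\tilde{K}\equiv0$.

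For the classification under $\hat{\nabla}\tilde{K}=0$, the case $\tilde{K}\equiv0$ is the hyperquadric case (i) by the criterion cited after \eqref{eqn:1.4}. Otherwise, on the open set where $\tilde{K}\neq0$ I would split into subcases according to whether $T$ vanishes. If $T\equiv0$, then $K=\tilde{K}$ is parallel and $M^n$ is a proper affine hypersphere with $\hat{\nabla}K=0$, so the full classification of \cite{CHM} produces cases (iv)--(viii). If $T\not\equiv0$, the goal is to show that $M^n$ is locally a generalized Calabi product of lower-dimensional factors each carrying parallel cubic form; I would approach this by analyzing the eigenspace decomposition induced by $T$ (equivalently by $K_T$), combining the apolarity condition with the centroaffine Codazzi equation and the parallelism of $\tilde{K}$ to produce $\hat{\nabla}$-parallel orthogonal distributions whose integral leaves are the factors, then applying \cite{CHM} to each factor to identify it and yield cases (ii) and (iii). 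The converse is a direct check that $\hat{\nabla}\tilde{K}=0$ holds on each of the models (i)--(viii).

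The main obstacle will be the subcase $T\not\equiv0$ with $\tilde{K}\not\equiv0$: translating the purely tensorial hypothesis $\hat{\nabla}\tilde{K}=0$ into an honest local product decomposition requires a delicate spectral analysis of the parallel traceless tensor together with the centroaffine Gauss and Codazzi equations, and one must correctly isolate when exactly one factor degenerates to a point (case (ii)) from when both factors are positive-dimensional hypersurfaces (case (iii)).
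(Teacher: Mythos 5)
Your first half is correct and is essentially the paper's Proposition 3.1: using apolarity of $\tilde K$ (both traces of $\tilde K$ vanish) and the $\hat\nabla$-parallelism of $h$, one gets the Pythagorean identity $\|\hat{\nabla}K\|^2=\|\hat{\nabla}\tilde{K}\|^2+\tfrac{3n^2}{n+2}\|\hat{\nabla}T\|^2$, so the inequality holds with equality precisely when $\hat{\nabla}\tilde{K}=0$. The computation of $\|B\|^2$ (three diagonal contributions $3n\|\hat\nabla T\|^2$ plus cross terms $6\|\hat\nabla T\|^2$) is right.

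The classification half has a genuine gap. First, your trichotomy is inconsistent with the theorem's own list: case (viii), the canonical hypersurface $x_{n+1}=\tfrac{1}{2x_1}\sum_{k=2}^n x_k^2+x_1\ln x_1$, has $T\not\equiv0$ and $\tilde K\not\equiv0$ but is not a generalized Calabi product (it is listed separately from (ii) and (iii) for exactly that reason), so your plan to prove that the subcase $T\not\equiv0$, $\tilde K\not\equiv0$ always produces a Calabi product decomposition cannot succeed. Second, and more fundamentally, you are missing the reduction that makes the problem tractable. The paper's pivotal step (Lemmas 3.2 and 3.3) is that $\hat{\nabla}\tilde{K}=0$ combined with the centroaffine Codazzi equation $K_{ij,l}^k=K_{ij,k}^l$ (total symmetry of $\hat\nabla C$ in all four indices) forces $\hat{\nabla}T=\lambda\,\mathrm{id}$ and hence $K_{ij,l}^k=\mu(\delta_{kl}\delta_{ij}+\delta_{il}\delta_{jk}+\delta_{jl}\delta_{ik})$ for a single function $\mu$. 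A Ricci-identity computation with the Gauss equation in an Ejiri-type adapted basis then yields the dichotomy: if $\mu$ is non-constant, $K$ collapses to the pure trace part, i.e.\ $\tilde K=0$ and one lands in the hyperquadric case (i); if $\mu$ is constant, differentiating the resulting algebraic relations along a geodesic forces $12\mu^3=0$, hence $\hat{\nabla}K=0$, and the full classification of \cite{CHM} --- which already contains the Calabi products (ii)--(iii), the symmetric spaces (iv)--(vii), and case (viii) --- applies wholesale. Without this reduction, your third subcase would require re-proving the parallel-cubic-form classification from scratch via the ``delicate spectral analysis'' you defer, and your sketch supplies no mechanism for it.
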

\begin{remark}\label{rm:1.1}
For detailed discussions about all the above examples, namely the
notion of {\it (generalized) Calabi product} and the standard
embedding, the readers are referred to \cite{CHM} (cf. also
\cite{HLV}). We should point it out that the ellipsoids and the
hyperboloids which are centered at the origin $O$, and also the
hypersurfaces in (ii)-(viii), have parallel cubic form, i.e.,
$\hat{\nabla}C=0$ or equivalently $\hat{\nabla}K=0$; while a
hyperquadric with no center or not being centered at the origin $O$
has the properties that $K\not=0$ and $\hat{\nabla}K\not=0$ (cf.
\cite{CHM}). We also remark that a centroaffine hypersurface is
called {\it canonical} meaning that its centroaffine metric $h$ is
flat and its cubic form $C$ satisfies $\hat{\nabla}C=0$ (cf.
\cite{LW}).
\end{remark}

%
\begin{remark}\label{rm:1.2}
The lists of centroaffine hypersurfaces as shown in Theorem
\ref{thm:1.1} give the classification of centroaffine hypersurfaces
in $\mathbb{R}^{n+1}$ with parallel traceless cubic form (which is
equivalent to $\hat{\nabla}\tilde{K}=0$) for every $n\ge2$. This is
a complete extension of \cite{LW2} where the classification was
achieved only for $n=2$. On the other hand, locally strongly convex
centroaffine hypersurfaces with $\hat{\nabla}K=0$ are classified in
\cite{CHM} for every dimensions.
\end{remark}

%
\begin{remark}\label{rm:1.3}
Besides that as stated in \cite{CHM}, different characterizations on
the typical examples of centroaffine hypersurfaces appearing in
Theorem \ref{thm:1.1} were established in our recent articles,
\cite{CH} and \cite{CHLL}, from other aspects of differential
geometric invariants.
\end{remark}

%
\begin{remark}\label{rm:1.4}
Related with the study of centroaffine hypersurfaces, with pleasure
we would like to introduce the interesting results of Li, Simon and
Zhao \cite{LSZ} and also the very recent development due to
Cort\'es, Nardmann and Suhr \cite{CNS}, where among other important
results the authors investigated the problem under what conditions a
locally strongly convex centroaffine hypersurface is complete with
respect to the centroaffine metric.
\end{remark}


\bfabs{\bf Acknowledgements}. The authors would like express their
thanks to Professors H. Li, U. Simon and L. Vrancken for many
aspects of their help with this paper. As a matter of fact, our
result Theorem \ref{thm:1.1} could be regarded as an affine
differential geometric counterpart of the main result in \cite{LV},
where Li and Vrancken proved a basic inequality for Lagrangian
submanifolds in complex space forms and as its direct consequence
they obtained a new characterization of the Whitney spheres.


\numberwithin{equation}{section}
\section{Preliminaries}\label{sect:2}

In this section, we briefly recall some basic facts about
centroaffine hypersurfaces. We refer to \cite{LLS}, \cite{LSZH,NS},
\cite{SSV} and \cite{LW,W} for more detailed discussions.

Given a centroaffine hypersurface $M^n$, we choose an
$h$-orthonormal tangential frame field $\{e_1, \ldots, e_n\}$. Let
$\{\theta_1,\ldots,\theta_n\}$ be its dual frame field and
$\{\theta_{ij}\}$ its Levi-Civita connection forms. Let $K_{ij}^k$
and $T^i$ denote the components of $K$ and $T$ with respect to
$\{e_i\}$. Then \eqref{eqn:1.4} can be written as
\begin{equation}\label{eqn:2.1}
\tilde{K}_{ij}^k:=K_{ij}^k-\tfrac{n}{n+2}(T^k \delta_{ij} +T^i
\delta_{jk}+T^j \delta_{ik}),
\end{equation}
where $K_{ij}^k=h(K_{e_i}e_j,e_k)$, $T^i=\tfrac1n\sum_jK^i_{jj}$.

Let $K_{ij,l}^k$ and $T^j_{,i}$ be the components of the covariant
differentiation $\hat\nabla K$ and $\hat\nabla T$, respectively,
which by definition can be expressed by
$$
\sum_lK_{ij,l}^k\theta_l=dK_{ij}^k+\sum_lK_{lj}^k\theta_{li}+\sum_lK_{il}^k\theta_{lj}+\sum_lK_{ij}^l\theta_{lk},
$$
$$
\sum_iT_{,i}^j\theta_i=dT^j+\sum_iT^i\theta_{ij}.
$$

Denote by $\hat{R}_{ijkl}$ the components of the Riemannian
curvature tensor of the centroaffine metric $h$. Then, we have the
equations of Gauss and Codazzi as follows:
\begin{equation}\label{eqn:2.3}
\hat{R}_{ijkl}=\varepsilon(\delta_{ik}\delta_{jl}
-\delta_{il}\delta_{jk}) +\sum_m(K_{il}^mK_{jk}^m-K_{ik}^mK_{jl}^m),
\end{equation}
\begin{equation}\label{eqn:2.2}
K_{ij,l}^k=K_{ij,k}^l,\ \ 1\le i,j,k,l\le n.
\end{equation}

\vskip 3mm



\numberwithin{equation}{section}
\section{The inequality and some related lemmas}\label{sect:3}

We start with the following result.

\begin{proposition}\label{prop:3.1}
Let $x: M^n\rightarrow \mathbb{R}^{n+1}$ be a locally
strongly convex centroaffine hypersurface. Then
\begin{equation}\label{eqn:3.1}
\|\hat{\nabla}K\|^2\geq \tfrac{3n^2}{n+2}\|\hat{\nabla}T\|^2,
\end{equation}
where $\|\hat{\nabla}K\|^2=\sum (K_{ij,l}^k)^2$,
$\|\hat{\nabla}T\|^2=\sum (T_{,i}^j)^2$. Moreover, the equality
holds in \eqref{eqn:3.1} if and only if the traceless difference
tensor $\tilde{K}$ is parallel, i.e., $\hat\nabla\tilde{K}=0$, or
equivalently:
\begin{equation}\label{eqn:3.2}
K_{ij,l}^k=\tfrac{n}{n+2}(T_{,l}^k \delta_{ij} +T_{,l}^i
\delta_{jk}+T_{,l}^j \delta_{ik}),\ \ 1\le i,j,k,l\le n.
\end{equation}
\end{proposition}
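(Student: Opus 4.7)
The plan is to write $\hat\nabla K$ as an $L^{2}$-orthogonal decomposition into $\hat\nabla\tilde K$ plus an explicit tensor built from $\hat\nabla T$, then read off both the inequality and the equality case.

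First I would differentiate the defining formula \eqref{eqn:2.1} covariantly. Since $\hat\nabla h=0$, the Kronecker symbols pass through the differentiation and we get
\begin{equation*}
K_{ij,l}^{k}=\tilde K_{ij,l}^{k}+\tfrac{n}{n+2}\bigl(T_{,l}^{k}\delta_{ij}+T_{,l}^{i}\delta_{jk}+T_{,l}^{j}\delta_{ik}\bigr).
\end{equation*}
Call the second summand $B_{ij,l}^{k}$. The key structural inputs I would flag at this point are: (a) the cubic form $C=-2h(K\cdot,\cdot)$ is totally symmetric, so $K_{ij}^{k}$ (and hence $\tilde K_{ij}^{k}$) is totally symmetric in $i,j,k$; (b) by Codazzi \eqref{eqn:2.2}, $K_{ij,l}^{k}$ is then totally symmetric in all four indices, and the same holds for $\tilde K_{ij,l}^{k}$; (c) a direct trace computation using $\sum_{i}K_{ii}^{k}=nT^{k}$ shows $\sum_{i}\tilde K_{ii}^{k}=0$, and differentiating gives $\sum_{i}\tilde K_{ii,l}^{k}=0$. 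By total symmetry, $\tilde K_{ij,l}^{k}$ is then traceless in every pair of indices.

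Next I would expand $\|\hat\nabla K\|^{2}=\|\hat\nabla\tilde K\|^{2}+2\langle\hat\nabla\tilde K,B\rangle+\|B\|^{2}$. The cross term
\begin{equation*}
\langle\hat\nabla\tilde K,B\rangle=\tfrac{n}{n+2}\sum_{i,j,k,l}\tilde K_{ij,l}^{k}\bigl(T_{,l}^{k}\delta_{ij}+T_{,l}^{i}\delta_{jk}+T_{,l}^{j}\delta_{ik}\bigr)
\end{equation*}
vanishes termwise, since each contraction collapses to the trace $\sum_{i}\tilde K_{ii,\cdot}^{\cdot}=0$ (using the total symmetry in (b) to move the repeated index into the diagonal position). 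For $\|B\|^{2}$ I would square out the three-term bracket: the three diagonal squares contribute $3n\|\hat\nabla T\|^{2}$ after summing the Kronecker deltas, and each of the three mixed products contributes $2\|\hat\nabla T\|^{2}$ because products like $\delta_{ij}\delta_{jk}$ force $i=j=k$. The total is $3(n+2)\|\hat\nabla T\|^{2}$, and the prefactor $\bigl(\tfrac{n}{n+2}\bigr)^{2}$ produces exactly $\tfrac{3n^{2}}{n+2}\|\hat\nabla T\|^{2}$.

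Combining the three ingredients gives
\begin{equation*}
\|\hat\nabla K\|^{2}=\|\hat\nabla\tilde K\|^{2}+\tfrac{3n^{2}}{n+2}\|\hat\nabla T\|^{2},
\end{equation*}
from which both the inequality \eqref{eqn:3.1} and the characterization of equality by $\hat\nabla\tilde K=0$ (equivalently \eqref{eqn:3.2}) are immediate. The only mildly delicate step is the vanishing of the cross term; it relies on knowing that $K_{ij,l}^{k}$ is totally symmetric in all four indices, which in turn requires both the symmetry of the cubic form and Codazzi \eqref{eqn:2.2}. Everything else is a straightforward bookkeeping computation, so I expect no serious obstacle.
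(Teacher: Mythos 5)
Your proof is correct and is essentially the paper's own argument: the paper introduces $\hat\nabla\tilde K$ via \eqref{eqn:3.3} and asserts the identity \eqref{eqn:3.4} as ``easy to check'', and your orthogonal-decomposition computation (vanishing cross term from the tracelessness of $\tilde K$, plus the fact that the squared norm of the pure-trace part equals $\tfrac{3n^{2}}{n+2}\|\hat\nabla T\|^{2}$) is precisely the omitted verification. One small inaccuracy that does not affect the argument: your claim in (b) that $\tilde K_{ij,l}^{k}$ inherits total symmetry in all four indices is false in general --- symmetry under $k\leftrightarrow l$ for $\hat\nabla\tilde K$ is equivalent to the nontrivial condition \eqref{eqn:3.7} --- but your cross-term computation only uses the symmetry of $\tilde K_{ij,l}^{k}$ in $i,j,k$ together with its vanishing traces, so nothing is lost.
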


\begin{proof}
From the definition \eqref{eqn:1.4} or \eqref{eqn:2.1}, we have
\begin{equation}\label{eqn:3.3}
\tilde{K}_{ij,l}^k=K_{ij,l}^k-\tfrac{n}{n+2}(T_{,l}^k \delta_{ij}
+T_{,l}^i \delta_{jk}+T_{,l}^j \delta_{ik}).
\end{equation}

It is easy to check that
\begin{equation}\label{eqn:3.4}
\begin{aligned}
0\le\|\hat{\nabla}\tilde{K}\|^2:&=\sum(\tilde{K}_{ij,l}^k)^2
=\sum (K_{ij,l}^k)^2-\tfrac{3n^2}{n+2}\sum (T_{,i}^j)^2\\
&=\|\hat{\nabla}K\|^2-\tfrac{3n^2}{n+2}\|\hat{\nabla}T\|^2.
\end{aligned}
\end{equation}

Obviously, equality in \eqref{eqn:3.1} holds if and only if
$\|\hat{\nabla}\tilde{K}\|=0$, i.e., it holds
$\tilde{K}_{ij,l}^k=0$, $1\le i,j,k,l\le n$, which is equivalent to
\eqref{eqn:3.2}.
\end{proof}

Next, we investigate the implications if \eqref{eqn:3.2} holds.

\begin{lemma}\label{lm:3.2}
Let $x: M^n\rightarrow \mathbb{R}^{n+1}$ be a locally strongly
convex centroaffine hypersurface. If \eqref{eqn:3.2} holds, then we
have $T_{,k}^j=\tfrac{1}{n}\sum T_{,i}^i \delta_{jk},\ 1\le j,k\le
n$, namely,
\begin{equation}\label{eqn:3.5}
\hat{\nabla}T=\lambda \cdot {\rm id},\ \ \lambda=\tfrac{1}{n}\sum
T_{,i}^i.
\end{equation}
\end{lemma}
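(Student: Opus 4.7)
The plan is to derive the conclusion by combining the hypothesis \eqref{eqn:3.2} with the Codazzi equation \eqref{eqn:2.2}, namely $K_{ij,l}^k=K_{ij,k}^l$, and then extracting information by well-chosen contractions. The hypothesis gives an explicit form for $K_{ij,l}^k$, and Codazzi tells us this expression must be symmetric in the pair $(k,l)$, which translates into a purely algebraic identity on the components $T_{,i}^j$ of $\hat{\nabla}T$.

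More concretely, substituting \eqref{eqn:3.2} into both sides of $K_{ij,l}^k=K_{ij,k}^l$ and clearing the common factor $n/(n+2)$ produces the identity
\begin{equation*}
T_{,l}^k\delta_{ij}+T_{,l}^i\delta_{jk}+T_{,l}^j\delta_{ik}
=T_{,k}^l\delta_{ij}+T_{,k}^i\delta_{jl}+T_{,k}^j\delta_{il},\qquad 1\le i,j,k,l\le n.
\end{equation*}
This is the single identity from which everything else will flow.

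Next I would perform two successive contractions. First, setting $i=j$ and summing over $i$ collapses the right- and left-hand sides to $(n+2)T_{,l}^k$ and $(n+2)T_{,k}^l$ respectively, giving the symmetry $T_{,l}^k=T_{,k}^l$ (equivalently, $\hat{\nabla}T$ is a symmetric $(1,1)$-tensor, viewed via $h$). Feeding this symmetry back into the displayed identity cancels the $\delta_{ij}$-terms and reduces it to
\begin{equation*}
T_{,l}^i\delta_{jk}+T_{,l}^j\delta_{ik}=T_{,i}^k\delta_{jl}+T_{,j}^k\delta_{il}.
\end{equation*}
Then contracting $j=k$ and summing over $j$, while using $T_{,l}^i=T_{,i}^l$ one more time, yields $nT_{,l}^i=\bigl(\sum_j T_{,j}^j\bigr)\delta_{il}$, which is exactly \eqref{eqn:3.5} with $\lambda=\tfrac{1}{n}\sum_iT_{,i}^i$.

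There is no real obstacle here beyond picking the right pair of traces; the proof is essentially algebraic manipulation forced by Codazzi. The only point that requires care is not to overlook that the first contraction only delivers the symmetry of $\hat{\nabla}T$, and that a second, different contraction is needed to promote this to proportionality with the identity. I expect the argument to be self-contained within a few lines once the Codazzi-identity above is written down.
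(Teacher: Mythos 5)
Your proposal is correct and follows essentially the same route as the paper: substitute \eqref{eqn:3.2} into the Codazzi relation \eqref{eqn:2.2} to obtain the algebraic identity \eqref{eqn:3.7}, then contract to force $\hat{\nabla}T=\lambda\cdot\mathrm{id}$. The only cosmetic difference is that you contract twice (first $i=j$ to establish $T_{,k}^{j}=T_{,j}^{k}$, then $j=k$), whereas the paper performs the single contraction $i=l$ while simply invoking that symmetry; your version makes that step explicit.
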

\begin{proof}
Exchanging $k$ with $l$ in \eqref{eqn:3.2}, we have
\begin{equation}\label{eqn:3.6}
K_{ij,k}^l=\tfrac{n}{n+2}(T_{,k}^l \delta_{ij}
+T_{,k}^i \delta_{jl}+T_{,k}^j \delta_{il}).
\end{equation}

Combining \eqref{eqn:2.2}, \eqref{eqn:3.2} and \eqref{eqn:3.6}, we
obtain
\begin{equation}\label{eqn:3.7}
T_{,k}^l \delta_{ij}
+T_{,k}^i \delta_{jl}+T_{,k}^j \delta_{il}=T_{,l}^k \delta_{ij}
+T_{,l}^i \delta_{jk}+T_{,l}^j \delta_{ik},\ 1\le i,j,k,l\le n.
\end{equation}

Taking the summation for $i=l$ in \eqref{eqn:3.7} and noting that
$T_{,k}^j=T^k_{,j}$, we get
\begin{equation}\label{eqn:3.8}
T_{,k}^j=\tfrac{1}{n}\sum T_{,i}^i \delta_{jk},\ 1\le j,k\le n.
\end{equation}
This verifies the assertion.
\end{proof}

\begin{remark}\label{rm:3.1}
If \eqref{eqn:3.5} holds, then $T$ is a conformal vector field and
$M^n$ by definition is called a Tchebychev hypersurface (cf.
\cite{LW1}). Therefore, if the equality holds in \eqref{eqn:3.1}
then $M^n$ is a Tchebychev hypersurface.
\end{remark}

\begin{lemma}\label{lm:3.3}
Let $x: M^n\rightarrow \mathbb{R}^{n+1}$ be a locally strongly
convex centroaffine hypersurface. Then, \eqref{eqn:3.2} holds if and
only if it holds that
\begin{equation}\label{eqn:3.9}
K_{ij,l}^k=\mu(\delta_{kl} \delta_{ij} +\delta_{il}
\delta_{jk}+\delta_{jl} \delta_{ik}),\ \ 1\le i,j,k,l\le n,
\end{equation}
where $\mu=\tfrac{1}{n+2}\sum T_{,l}^l$.
%
%
\end{lemma}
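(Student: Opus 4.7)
The plan is to establish the equivalence by two straightforward substitutions, exploiting Lemma \ref{lm:3.2} for the forward direction and a single trace for the converse.

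For the forward implication, I would assume \eqref{eqn:3.2}. Lemma \ref{lm:3.2} then gives $T_{,l}^k=\lambda\,\delta_{kl}$ with $\lambda=\tfrac{1}{n}\sum_i T_{,i}^i$. Plugging this directly into \eqref{eqn:3.2} produces
\begin{equation*}
K_{ij,l}^k=\tfrac{n\lambda}{n+2}\bigl(\delta_{kl}\delta_{ij}+\delta_{il}\delta_{jk}+\delta_{jl}\delta_{ik}\bigr),
\end{equation*}
which is exactly \eqref{eqn:3.9} with $\mu=\tfrac{n\lambda}{n+2}=\tfrac{1}{n+2}\sum_i T_{,i}^i$.

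For the converse, I would assume \eqref{eqn:3.9} and extract $T_{,l}^k$ by contracting the trace identity $nT^k=\sum_j K_{jj}^k$ (which follows from \eqref{eqn:1.3} together with the total symmetry of $K_{ij}^k$ implied by \eqref{eqn:1.2}). Differentiating gives $nT_{,l}^k=\sum_j K_{jj,l}^k$, and substituting \eqref{eqn:3.9} yields
\begin{equation*}
nT_{,l}^k=\mu\sum_j\bigl(\delta_{kl}\delta_{jj}+\delta_{jl}\delta_{jk}+\delta_{jl}\delta_{jk}\bigr)=\mu(n+2)\delta_{kl}.
\end{equation*}
Hence $T_{,l}^k=\tfrac{(n+2)\mu}{n}\delta_{kl}$, so $\sum_l T_{,l}^l=(n+2)\mu$, confirming the formula for $\mu$. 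Reinserting this expression for $T_{,l}^k$ into the right-hand side of \eqref{eqn:3.2} recovers precisely the right-hand side of \eqref{eqn:3.9}, completing the equivalence.

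There is no real obstacle here — the argument is bookkeeping. The only point requiring a touch of care is ensuring the coefficient matches on both sides, i.e., that the constants $\tfrac{n}{n+2}$ and the normalization of $\mu$ are consistent; this is secured by the trace computation above, which pins down the relationship $\mu=\tfrac{n\lambda}{n+2}$ in both directions.
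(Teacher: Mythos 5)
Your proposal is correct and follows essentially the same route as the paper: the paper likewise obtains the converse by contracting over $i=j$ in \eqref{eqn:3.9} to get $T_{,l}^k=\tfrac{n+2}{n}\mu\,\delta_{kl}$ and then substituting back, and derives the forward direction from Lemma \ref{lm:3.2} via \eqref{eqn:3.8}. The trace identity $nT_{,l}^k=\sum_j K_{jj,l}^k$ and the coefficient bookkeeping $\mu=\tfrac{n\lambda}{n+2}$ are exactly as in the paper.
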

\begin{proof}
For the ``if" part, we assume that \eqref{eqn:3.9} holds. By summing
over $i=j$ in \eqref{eqn:3.9}, we get
\begin{equation}\label{eqn:3.11}
T_{,l}^k=\tfrac{n+2}{n}\mu \delta_{kl},\ 1\le k,l\le n.
\end{equation}
Then \eqref{eqn:3.2} immediately follows.

Conversely, for the ``only if" part, we assume that \eqref{eqn:3.2}
holds. Then we have \eqref{eqn:3.8}, and therefore \eqref{eqn:3.9}
holds with $\mu=\tfrac1{n+2}\sum_lT_{,l}^l$.
\end{proof}

Now, we fix a point $p\in M^n$. For subsequent purpose, we will
review the well-known construction of a typical orthonormal basis
with respect to the centroaffine metric $h$ for $T_pM^n$, which was
introduced by Ejiri and has been widely applied, and proved to be
very useful for various situations, see e.g. \cite{HLSV} and
\cite{LV,MU}. The idea is to construct from the $(1,2)$ tensor $K$ a
self adjoint operator at a point; then one extends the eigenbasis to
a local field.

Let $p\in M^n$ and $U_pM^n=\{u\in T_pM^n \mid h(u,u)=1\}$. Since
$M^n$ is locally strongly convex, $U_pM^n$ is compact. We define a
function $f$ on $U_pM^n$ by $f(u)=h(K_uu,u)$. Then there is an
element $e_{1}\in U_pM^n$ at which the function $f(u)$ attains an
absolute maximum, denoted by $\lambda_{1}$. Then we have the
following lemma. For its proof, we refer the reader to \cite{HLSV}.
\begin{lemma}[\cite{HLSV}]\label{lm:3.4}
There exists an orthonormal basis $\{e_1,\ldots,e_n\}$ of $T_pM^n$
such that the following hold:
\begin{enumerate}
\item[(i)] $K_{e_{1}}e_{i}=\lambda_ie_i,\ for \ i=1,\ldots,n$.
\item[(ii)] $\lambda_1\geq 2\lambda_i$, for
$i\geq 2$. If $\lambda_1= 2\lambda_i$,
then $f(e_i)=0$.
\end{enumerate}
\end{lemma}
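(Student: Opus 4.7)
The plan is to apply a maximum-principle argument to the smooth function $f(u)=h(K_u u,u)$ on the compact unit sphere $U_pM^n$. The essential ingredient is the total symmetry of the cubic form $C(X,Y,Z)=-2h(K_XY,Z)$, which implies that each operator $K_X$ is $h$-self-adjoint and that, for any smooth curve $u(t)$ in $U_pM^n$, one has $\tfrac{d}{dt}f(u(t))=3\,h(K_{u(t)}u(t),u'(t))$.

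For (i), I would test the critical-point condition at $e_1$ along the curves $u(t)=\cos(t)\,e_1+\sin(t)\,v$ with $v\perp e_1$. The first-order condition forces $h(K_{e_1}e_1,v)=0$ for every $v\in e_1^{\perp}$, so $K_{e_1}e_1=\lambda_1\,e_1$ with $\lambda_1=f(e_1)$. Since $K_{e_1}$ is $h$-symmetric and preserves $e_1^{\perp}$, the spectral theorem then produces an $h$-orthonormal eigenbasis $\{e_2,\dots,e_n\}$ of $e_1^{\perp}$, giving an orthonormal basis of $T_pM^n$ on which $K_{e_1}$ acts diagonally with eigenvalues $\lambda_1,\dots,\lambda_n$.

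For (ii), fix $i\ge 2$ and consider the curve $u(t)=\cos(t)\,e_1+\sin(t)\,e_i$. Using (i), the cross term $h(K_{e_1}e_1,e_i)=\lambda_1\,h(e_1,e_i)$ vanishes and $h(K_{e_1}e_i,e_i)=\lambda_i$, so the Taylor expansion of $f(u(t))$ at $t=0$ reduces to
\begin{equation*}
f(u(t))=\lambda_1+\tfrac{3}{2}(2\lambda_i-\lambda_1)\,t^2+f(e_i)\,t^3+O(t^4).
\end{equation*}
Because $f$ attains its absolute maximum $\lambda_1$ at $e_1$, the quadratic coefficient must be nonpositive, which yields $\lambda_1\ge 2\lambda_i$. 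In the borderline case $\lambda_1=2\lambda_i$, the quadratic term vanishes, and a nonzero cubic term would make $f(u(t))>\lambda_1$ for a suitable sign of small $t$; consequently $f(e_i)=0$.

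The main subtlety is the equality statement in (ii): first-order analysis alone produces only the strict inequality, so one must carry the Taylor expansion through third order and exploit the odd parity of $t^3$ to rule out a ``saddle'' in each $e_i$-direction. Here the total symmetry of $C$ is what keeps the expansion manageable by eliminating otherwise awkward mixed terms, after which the argument reduces to the standard max-principle observation about sign changes near a critical point.
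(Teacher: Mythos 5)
Your proof is correct, and it is precisely the standard Ejiri maximization argument that the paper itself does not reproduce but delegates to \cite{HLSV}: first-order criticality of $f$ at $e_1$ gives $K_{e_1}e_1=\lambda_1 e_1$ and diagonalizability on $e_1^{\perp}$, and the exact identity $f(u(t))=\lambda_1\cos^3 t+3\lambda_i\cos t\sin^2 t+f(e_i)\sin^3 t$ yields both the second-order inequality $\lambda_1\ge 2\lambda_i$ and, via the odd cubic term, the vanishing of $f(e_i)$ in the equality case. The expansion and the sign analysis check out, so nothing further is needed.
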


When working at the point $p\in
M^n$, we will always assume that an orthonormal basis is chosen such
that Lemma \ref{lm:3.4} is satisfied. While if we work at a
neighborhood of $p\in M^n$ and if not stated otherwise, we will
choose an $h$-orthonormal frame field $\{E_1,\ldots, E_n\}$ such
that $E_1(p)=e_1, \ldots, E_n(p)=e_n$, and $\{e_1,\ldots,e_n\}$ is
chosen as in Lemma \ref{lm:3.4}.

\vskip 2mm

The following lemma is crucial for our proof of Theorem
\ref{thm:1.1}.

\begin{lemma}\label{lm:3.5}
Let $x: M^n\rightarrow \mathbb{R}^{n+1}$ be a locally strongly
convex centroaffine hypersurface. If \eqref{eqn:3.9} holds, then we
have
\begin{equation}\label{eqn:3.12}
e_l(\mu)=0,\ \ 2\le l\le n,
\end{equation}
\begin{equation}\label{eqn:3.13}
e_1(\mu)=(2\lambda_l-\lambda_1)(\lambda_l^2-\lambda_1\lambda_l
+\varepsilon),\ \ 2\le l\le n,
\end{equation}
\begin{equation}\label{eqn:3.14}
(2\lambda_k-\lambda_1)(\lambda_l-\lambda_j)K_{jk}^l=0,\ \ 2\le j\neq
l\le n,\ 1\le k\le n,
\end{equation}
\begin{equation}\label{eqn:3.15}
(\lambda_l^2-\lambda_1\lambda_l +\varepsilon)K_{ll}^l=0,\ \ 2\le
l\le n.
\end{equation}
\end{lemma}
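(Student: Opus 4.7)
My plan is to extract \eqref{eqn:3.12}--\eqref{eqn:3.15} from a sequence of Ricci identities applied to the tensor $K$, evaluated at the point $p$ in the Ejiri basis $\{e_1,\dots,e_n\}$ of Lemma \ref{lm:3.4}. Three ingredients will do the work: (a) a second covariant differentiation of \eqref{eqn:3.9} yields $K_{ij,lm}^k = e_m(\mu)\,A_{ij,l}^k$ with $A_{ij,l}^k := \delta_{kl}\delta_{ij}+\delta_{il}\delta_{jk}+\delta_{jl}\delta_{ik}$ a parallel tensor, so the commutator $K_{ij,lm}^k - K_{ij,ml}^k$ is fully explicit in terms of $e_\cdot(\mu)$; (b) the Ricci identity expresses this commutator as a sum of contractions of $\hat R$ with $K$; (c) the Gauss equation \eqref{eqn:2.3}, combined with the diagonal values $K_{1i}^j = \lambda_i\delta_{ij}$ and the total symmetry $K_{ij}^k = K_{jk}^i$ (since $C = \nabla h$ is totally symmetric), computes every needed curvature component in closed form (in particular $R_{l1l1} = \varepsilon + \lambda_l^2 - \lambda_1\lambda_l$, $R_{n1l1} = \delta_{nl}R_{l1l1}$, and $R_{kjl1} = (\lambda_k-\lambda_j)K_{jl}^k$ for $j,k,l\ge 2$).

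The easier three identities come from single Ricci applications. For \eqref{eqn:3.12}, I choose indices $(i,j,k;l,m) = (1,1,1;1,l)$, $l \ge 2$: the LHS reduces to $3e_l(\mu)$ while every curvature term on the RHS vanishes by antisymmetry of $\hat R$ and the diagonal form of $K_{11}^n$, forcing $e_l(\mu) = 0$. For \eqref{eqn:3.13}, I choose $(i,j,k;l,m) = (1,1,l;l,1)$: the LHS is $e_1(\mu)$ and a Gauss-based evaluation of the RHS yields $(2\lambda_l-\lambda_1)R_{l1l1}$. For \eqref{eqn:3.15}, I choose $(i,j,k;l,m) = (1,j,j;l,1)$: two of the three Ricci terms vanish by antisymmetry (they involve $R_{jjl1}=0$), the surviving term is $R_{l1l1}K_{jj}^l$, and specializing $j = l$ gives the claim.

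The identity \eqref{eqn:3.14} is the most delicate. A single Ricci application at $(1,j,l;k,1)$ (with distinct $j,k,l \ge 2$, $j\ne l$) only produces $\bigl[(\lambda_l-\lambda_j)^2 + R_{k1k1}\bigr]K_{jk}^l = 0$, whose coefficient has the wrong shape. The key step is to run the same computation with the roles of $j$ and $k$ interchanged, obtaining $\bigl[(\lambda_l-\lambda_k)^2 + R_{j1j1}\bigr]K_{jk}^l = 0$; subtracting these two relations and simplifying the difference $R_{j1j1}-R_{k1k1} = (\lambda_j-\lambda_k)(\lambda_j+\lambda_k-\lambda_1)$ collapses everything into $(2\lambda_l-\lambda_1)(\lambda_k-\lambda_j)K_{jk}^l = 0$, which by total symmetry of $K$ is equivalent to \eqref{eqn:3.14} after relabeling. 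The subcase $k=1$ is trivial because $K_{j1}^l = \lambda_j\delta_{jl}$; the subcases $k\in\{j,l\}$ are settled by an analogous two-identity subtraction using the pair $(1,j,j;l,1)$ and $(j,1,l;j,1)$, whose difference factors as $(2\lambda_j-\lambda_1)(\lambda_l-\lambda_j)K_{jj}^l=0$. The main obstacle throughout is precisely the design of these subtractions: a single Ricci identity does not expose the factor $(2\lambda_k-\lambda_1)$, and only by pairing the correct two identities can one isolate it cleanly.
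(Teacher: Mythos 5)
Your proposal is correct, and its overall architecture is exactly the paper's: differentiate \eqref{eqn:3.9} covariantly so that the commutator $K_{ij,lq}^k-K_{ij,ql}^k$ is explicit in $e_\cdot(\mu)$, equate it via the Ricci identity to curvature contractions, evaluate those with the Gauss equation \eqref{eqn:2.3} in the Ejiri basis, and specialize indices. Your derivations of \eqref{eqn:3.12}, \eqref{eqn:3.13} and \eqref{eqn:3.15} coincide (up to trivial relabeling) with the paper's specializations $i=j=q=1$, $k=1$; $i=j=q=1$, $k=l\ge2$; and $i=q=1$, $j=k=l\ge2$ of its master identity \eqref{eqn:3.18}. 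The one genuine divergence is \eqref{eqn:3.14}: you commute derivatives in the directions $e_k$ and $e_1$ acting on $K_{1j}^l$ and must subtract two such identities (plus separate subcases for $k=1$ and $k\in\{j,l\}$) to make the factor $(2\lambda_k-\lambda_1)$ appear. The paper instead takes $i=j=1$ and commutes derivatives in two directions $e_l,e_q$ with $2\le q\ne l\le n$; there the two lower-index Ricci terms each contribute $\lambda_k(\cdots)$ and the upper-index term contributes $-\lambda_1(\cdots)$, so the coefficient $(2\lambda_k-\lambda_1)$ and the factor $(\lambda_l-\lambda_q)K_{qk}^l$ (after using total symmetry of $K$) emerge from a single identity valid for all $1\le k\le n$ at once. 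Your pair-and-subtract scheme is valid and your algebra checks out, but the paper's index choice is the more economical one, eliminating all the case analysis you flag as the main obstacle.
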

\begin{proof}

Taking the covariant derivative of \eqref{eqn:3.9} implies that
\begin{equation}\label{eqn:3.16}
K_{ij,lq}^k=e_q(\mu)(\delta_{kl} \delta_{ij} +\delta_{il}
\delta_{jk}+\delta_{jl} \delta_{ik}).
\end{equation}
Exchanging $l$ with $q$ in \eqref{eqn:3.16}, we have
\begin{equation}\label{eqn:3.17}
K_{ij,ql}^k=e_l(\mu)(\delta_{kq}\delta_{ij}+\delta_{iq}
\delta_{jk}+\delta_{jq}\delta_{ik}).
\end{equation}

From \eqref{eqn:3.16}, \eqref{eqn:3.17} and the Ricci identity, we
use \eqref{eqn:2.3} to obtain
\begin{equation}\label{eqn:3.18}
\begin{aligned}
&e_q(\mu)(\delta_{kl} \delta_{ij} +\delta_{il}
\delta_{jk}+\delta_{jl} \delta_{ik})-e_l(\mu)(\delta_{kq}
\delta_{ij}
+\delta_{iq} \delta_{jk}+\delta_{jq} \delta_{ik})\\
&=\sum_m
K_{mj}^k\big[\varepsilon(\delta_{iq}\delta_{lm}-\delta_{il}\delta_{qm})
+\sum_r(K_{il}^rK_{qm}^r-K_{qi}^rK_{lm}^r)\big]\\
&\ \ +\sum_m
K_{mi}^k\big[\varepsilon(\delta_{jq}\delta_{lm}-\delta_{jl}\delta_{qm})
+\sum_r(K_{jl}^rK_{qm}^r-K_{qj}^rK_{lm}^r)\big]\\
&\ \ -\sum_m
K_{ij}^m\big[\varepsilon(\delta_{mq}\delta_{kl}-\delta_{ml}\delta_{kq})
+\sum_r(K_{ml}^rK_{kq}^r-K_{mq}^rK_{lk}^r)\big].
\end{aligned}
\end{equation}

Taking $i=j=q=1$ and $l\geq 2$ in \eqref{eqn:3.18}, we obtain that
\begin{equation}\label{eqn:3.19}
e_1(\mu)\delta_{kl}-3e_l(\mu)\delta_{k1}=(2\lambda_k
-\lambda_1)(\lambda_k^2-\lambda_1\lambda_k
+\varepsilon)\delta_{kl}.
\end{equation}

First, letting $k=1$ in \eqref{eqn:3.19}, we get
\begin{equation*}
e_l(\mu)=0,\ \ 2\le l\le n.
\end{equation*}

Next, letting $k=l\geq 2$ in \eqref{eqn:3.19}, we have
\begin{equation*}
e_1(\mu)=(2\lambda_l-\lambda_1)(\lambda_l^2-\lambda_1\lambda_l
+\varepsilon),\ \ 2\le l\le n.
\end{equation*}

Then, letting $i=j=1$ and $2\le q\neq l \le n$ in \eqref{eqn:3.18},
combining with \eqref{eqn:3.12}, we obtain
\begin{equation*}
(2\lambda_k-\lambda_1)(\lambda_l-\lambda_q)K_{qk}^l=0,\ \ 2\le q\neq
l\le n,\ 1\le k\le n.
\end{equation*}

Finally, letting $i=q=1$ and $j=k=l\geq 2$ in \eqref{eqn:3.18}, a
direct calculation gives
\begin{equation*}
(\lambda_l^2-\lambda_1\lambda_l +\varepsilon)K_{ll}^l=0,\ \ 2\le
l\le n.
\end{equation*}

We have completed the proof of Lemma \ref{lm:3.5}.
\end{proof}

\vskip 2mm


\numberwithin{equation}{section}
\section{Proof of the main theorem}\label{sect:4}

In this section, we will complete the proof of Theorem
\ref{thm:1.1}. Let $x: M^n\rightarrow \mathbb{R}^{n+1}$ be a locally
strongly convex centroaffine hypersurface. Then, according to
Proposition \ref{prop:3.1} and Lemma \ref{lm:3.3}, to prove Theorem
\ref{thm:1.1} we are left to consider the case that \eqref{eqn:3.9}
holds identically for some function $\mu$ on $M^n$.

\subsection{\eqref{eqn:3.9} holds with $\mu\neq {\rm
constant}$}\label{sect:4.1}

In this subsection, we consider $n$-dimensional locally strongly
convex centroaffine hypersurfaces such that \eqref{eqn:3.9} holds
identically with $\mu\not={\rm constant}$. Since our result is local
in nature, the non-constancy of $\mu$ allows us to assume that
$U_1:=\{ q\in M^n\mid X(\mu)=0, \forall X\in T_qM^n\}$ is not an
open subset. Therefore, from now on we will carry our discussion in
the following open dense subset of $M^n$:
$$
M'=\{q\in M^n\mid\ {\rm there\ exists}\ X\in T_qM^n\ {\rm such\
that\ } X(\mu)\neq0\}.
$$

First of all, we have the following lemma.
\begin{lemma}\label{lm:4.1}
If \eqref{eqn:3.9} holds at every point of $M^n$ with $\mu\neq {\rm
constant}$, then with respect to the orthonormal basis as stated in
Lemma \ref{lm:3.4}, the number of the distinct eigenvalues of
$K_{e_1}$ can be at most $3$, so that it equals $2$ or $3$.
\end{lemma}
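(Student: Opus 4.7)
The plan is to work at a fixed point $p\in M'$ in the orthonormal basis supplied by Lemma \ref{lm:3.4} and to combine the algebraic identities of Lemma \ref{lm:3.5} with a sign/maximality analysis of $\lambda_1$. First, since $\mu$ is non-constant and \eqref{eqn:3.12} forces $e_l(\mu)(p)=0$ for all $l\ge 2$, the defining property of $M'$ gives $e_1(\mu)(p)\ne 0$. Feeding this into \eqref{eqn:3.13} shows that both factors on its right-hand side are nonzero; in particular $2\lambda_l-\lambda_1\ne 0$, which together with Lemma \ref{lm:3.4}(ii) upgrades the Ejiri inequality to the strict form $\lambda_1>2\lambda_l$ for all $l\ge 2$.

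The crux is then to read \eqref{eqn:3.13} as saying that every $\lambda_l$ with $l\ge 2$ is a root of the single real cubic
\[
P(\lambda):=(2\lambda-\lambda_1)(\lambda^2-\lambda_1\lambda+\varepsilon)-e_1(\mu)=2\lambda^3-3\lambda_1\lambda^2+(\lambda_1^2+2\varepsilon)\lambda-(\varepsilon\lambda_1+e_1(\mu)).
\]
By Vieta, the three roots of $P$ (counted with multiplicity) sum to $3\lambda_1/2$. If three distinct real roots all lay strictly below $\lambda_1/2$, their sum would be strictly less than $3\lambda_1/2$, contradicting Vieta; hence at most two real roots of $P$ can lie in $(-\infty,\lambda_1/2)$. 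Since the strict inequality of the first step places every $\lambda_l$ ($l\ge 2$) precisely in this interval, the set $\{\lambda_2,\ldots,\lambda_n\}$ has at most two distinct elements, and adjoining $\lambda_1$ bounds the number of distinct eigenvalues of $K_{e_1}$ by three.

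To exclude the one-eigenvalue case, I would note that $\lambda_1=\max_{u\in U_pM^n}f(u)$ with $f(u)=h(K_uu,u)$, and that the antisymmetry $f(-u)=-f(u)$ forces $\lambda_1\ge 0$; moreover $\lambda_1=0$ would give $f\equiv 0$, hence $C=0$ and $K(p)=0$ by polarization, and then \eqref{eqn:3.13} would read $e_1(\mu)(p)=0$, contradicting the first paragraph. Thus $\lambda_1>0$; if $K_{e_1}$ were a scalar multiple of the identity, the strict inequality of the first paragraph applied with $\lambda_l=\lambda_1$ would read $\lambda_1>2\lambda_1$, forcing $\lambda_1<0$, absurd. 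The main obstacle I anticipate is precisely the cubic-plus-Vieta step: nothing a priori forces $\lambda_1$ itself to be a root of $P$, so without extra input one would only bound the distinct eigenvalues by $3+1=4$, and it is the specific coefficient $-3\lambda_1$ of $\lambda^2$ in $P$ that makes the Vieta sum play off against the bound $\lambda_l<\lambda_1/2$ to trim the count by one.
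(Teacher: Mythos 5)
Your proof is correct and follows essentially the same route as the paper: from $e_1(\mu)\neq 0$ you get the strict bound $\lambda_1>2\lambda_l$ and observe that every $\lambda_l$ ($l\ge 2$) is a root of one fixed real cubic determined by \eqref{eqn:3.13}, of which at most two roots can satisfy that bound. The only (equivalent) difference is in how the root count is obtained: you apply Vieta's formula to the cubic in $\lambda$, whereas the paper substitutes $y=\lambda_1-2\lambda_l$ to get the depressed cubic $e_1(\mu)+\tfrac14 y(y^2+4\varepsilon-\lambda_1^2)=0$ and counts its positive solutions by a case analysis on the signs of $4\varepsilon-\lambda_1^2$ and $e_1(\mu)$.
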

\begin{proof}
Let $p\in M'$. From Lemma \ref{lm:3.5}, we have
\begin{equation}\label{eqn:4.1}
e_1(\mu)=(2\lambda_l-\lambda_1)(\lambda_l^2-\lambda_1\lambda_l
+\varepsilon)\neq 0,\ \ 2\le l\le n.
\end{equation}
It follows that
\begin{equation}\label{eqn:4.2}
\lambda_1>0,\ \lambda_1-2\lambda_l>0,\
\lambda_l^2-\lambda_1\lambda_l +\varepsilon\neq0,\ \ 2\le l\le n,
\end{equation}
and, for each $2\le l\le n$, $y_l=\lambda_1-2\lambda_l$ satisfies
the following equation in $y$:
\begin{equation}\label{eqn:4.3}
e_1(\mu)+\tfrac{1}{4}y(y^2+4\varepsilon-\lambda_1^2)=0,\ \ y>0.
\end{equation}

Now, about the solution $y$ of \eqref{eqn:4.3}, we consider the
following three cases:

\begin{enumerate}
\item[(1)] If $4\varepsilon-\lambda_1^2\geq 0$, then \eqref{eqn:4.3} shows that
$e_1(\mu)<0$. In this case, as an equation of $y$, \eqref{eqn:4.3}
has only one positive solution. This implies that we have
$\lambda_2=\cdots=\lambda_n$.

\vskip 1mm

\item[(2)] If $4\varepsilon-\lambda_1^2<0$ and $e_1(\mu)<0$,
then again \eqref{eqn:4.3} has only one positive solution $y$ and
that $\lambda_2=\cdots=\lambda_n$.

\vskip 1mm

\item[(3)] If $4\varepsilon-\lambda_1^2<0$ and $e_1(\mu)>0$, then
\eqref{eqn:4.3} has at most two positive solutions. This implies
that at most two of $\{\lambda_2,\ldots,\lambda_n\}$ are distinct.
\end{enumerate}

On the other hand, from \eqref{eqn:4.2} we easily see that
$\lambda_1>\lambda_l$ for all $l\ge2$.

This clearly completes the proof of Lemma \ref{lm:4.1}.
\end{proof}

As a direct consequence of Lemma \ref{lm:4.1}, the study of
centroaffine hypersurfaces such that \eqref{eqn:3.9} holds
identically with $\mu\neq {\rm constant}$ can be divided into two
cases:

\vskip 2mm

\textbf{Case (i)}.\ \ $\lambda_{2}=\cdots=\lambda_{m}<
\lambda_{m+1}=\cdots=\lambda_{n},\ \ 2\leq m\leq n-1$.

\vskip 1mm

\textbf{Case (ii)}.\ \ $\lambda_{2}=\cdots=\lambda_{n}$.

\vskip 2mm

The following lemma is important in sequel of this subsection.

\begin{lemma}\label{lm:4.2}
If \eqref{eqn:3.9} holds at every point of $M^n$ with $\mu\neq {\rm
constant}$, then, for $\{e_i\}$ as described in Lemma \ref{lm:3.4},
the difference tensor $K$ takes the following form:
\begin{equation}\label{eqn:4.4}
K_{e_1}e_1=\lambda_1 e_1,\ K_{e_1}e_i=\lambda_i e_i,\
K_{e_i}e_j=\lambda_i\delta_{ij} e_1,\ \ i, j=2,\ldots,n,
\end{equation}
\end{lemma}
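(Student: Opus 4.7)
The plan is to deduce \eqref{eqn:4.4} at the point $p$ from Lemmas \ref{lm:3.4} and \ref{lm:3.5} by combining the total symmetry of $K$ with the rotational freedom of the frame inside eigenspaces of $K_{e_1}$. The first two identities in \eqref{eqn:4.4} are exactly Lemma \ref{lm:3.4}(i), and the $e_1$-component of the third identity is automatic: since $h(K_XY,Z)$ is totally symmetric by \eqref{eqn:1.2}, for $i,j\geq 2$ we have $K_{ij}^1 = K_{1i}^j = \lambda_i\delta_{ij}$. Thus the whole problem reduces to proving
\[
K_{ij}^k = 0 \quad \text{for all } i,j,k \in \{2,\ldots,n\}.
\]

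To kill components whose indices straddle two distinct eigenspaces of $K_{e_1}|_{e_1^\perp}$, I would observe that \eqref{eqn:4.2} forces the prefactor $(2\lambda_k-\lambda_1)$ in \eqref{eqn:3.14} to be nonzero for every $1\leq k\leq n$ (equal to $\lambda_1>0$ for $k=1$ and strictly negative for $k\geq 2$). Hence \eqref{eqn:3.14} collapses to $(\lambda_l-\lambda_j)K_{jk}^l=0$ whenever $2\leq j\neq l\leq n$, and the total symmetry of $K$ in $(i,j,k)$ then wipes out any $K_{ij}^k$ as soon as two of its indices sit in distinct eigenspaces. In both Cases (i) and (ii) of Lemma \ref{lm:4.1}, the only components that can still be nonzero are those whose three indices all lie in a common eigenspace $V_a \subset e_1^\perp$.

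The remaining components are dispatched by rotating the frame inside $V_a$: since $K_{e_1}|_{V_a} = a\cdot\mathrm{id}_{V_a}$, any orthonormal basis of $V_a$, together with $e_1$ and the untouched other eigenspace, forms a new orthonormal basis of $T_pM^n$ still satisfying Lemma \ref{lm:3.4} (condition (ii) is vacuous thanks to the strict inequality $\lambda_1 > 2a$ from \eqref{eqn:4.2}). The derivation of \eqref{eqn:3.15} uses only these properties plus the nonvanishing of $\lambda_l^2-\lambda_1\lambda_l+\varepsilon$, a quantity depending solely on the eigenvalue $a$; hence \eqref{eqn:3.15} remains valid in every such rotated frame and gives $h(K_u u,u)=0$ for every unit vector $u\in V_a$. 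Polarization of the totally symmetric cubic $(u,v,w)\mapsto h(K_uv,w)$ restricted to $V_a$ then yields $K_{ij}^k=0$ throughout this block, and running the same argument on the second eigenspace in Case (i) (or on the whole of $e_1^\perp$ in Case (ii)) completes the proof.

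The main subtle point I anticipate is the rotation step: one has to recognize that the Lemma \ref{lm:3.4} frame is not unique when $K_{e_1}|_{e_1^\perp}$ has a repeated eigenvalue, and that the derivation of Lemma \ref{lm:3.5} is robust under such rotations. This robustness is precisely what promotes the single scalar identity \eqref{eqn:3.15} (which a priori kills only one diagonal component) into a polynomial identity on the entire eigenspace, which is the content of Lemma \ref{lm:4.2}.
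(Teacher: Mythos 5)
Your proposal is correct and follows essentially the same route as the paper: the cross-block components are killed by \eqref{eqn:3.14} together with \eqref{eqn:4.2}, and the components within a single eigenspace of $K_{e_1}|_{e_1^\perp}$ are killed by \eqref{eqn:3.15}. Your rotation-plus-polarization step is exactly the (unstated) justification behind the paper's passage from $h(K_{e_l}e_l,e_l)=0$ to $h(K_{e_i}e_j,e_k)=0$ on a whole eigenspace, so you have merely made explicit what the paper leaves implicit.
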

\begin{proof} We separate the proof into two cases as above.

If {\bf Case (i)} occurs, then from \eqref{eqn:3.15} and
\eqref{eqn:4.2}, we get
$$
h(K_{e_l}e_l,e_l)=0,\ \ 2\le l\le m.
$$
It follows that
\begin{equation}\label{eqn:4.5}
h(K_{e_i}e_j,e_k)=0,\ \ 2\leq i, j, k\leq m.
\end{equation}

On the other hand, from \eqref{eqn:3.14} and \eqref{eqn:4.2}, we
obtain
\begin{equation}\label{eqn:4.6}
h(K_{e_i}e_j,e_k)=0,\ \ 2\leq i, j\leq m,\ m+1\leq k\leq n.
\end{equation}

Combining \eqref{eqn:4.5}, \eqref{eqn:4.6} and the fact
$h(K_{e_i}e_j,e_1)=\lambda_i\delta_{ij}$, we get the assertion
\begin{equation}\label{eqn:4.7}
K_{e_i}e_j=\lambda_i\delta_{ij}e_1,\ \ 2\leq i, j\leq m.
\end{equation}

Similarly, we can prove that
\begin{equation}\label{eqn:4.8}
K_{e_i}e_j=\lambda_i\delta_{ij}e_1,\ \ m+1\leq i, j\leq n.
\end{equation}

From \eqref{eqn:3.14} and \eqref{eqn:4.2} again, we have
\begin{equation}\label{eqn:4.9}
h(K_{e_i}e_j,e_k)=0,\ \ 2\leq i\leq m,\ m+1\leq j\leq n,\ 1\leq
k\leq n.
\end{equation}
This shows that
$$
K_{e_i}e_j=0,\ \ 2\leq i\leq m,\ m+1\leq j\leq n.
$$

In summary, we have completed the proof of Lemma \ref{lm:4.2} for
{\bf Case (i)}.

Next, similar to the proof of \eqref{eqn:4.5}, we can verify the
assertion for {\bf Case (ii)}.
\end{proof}

To treat the above two cases separately, we first state the
following result.
\begin{lemma}\label{lm:4.3}
Case (i) does not occur.
\end{lemma}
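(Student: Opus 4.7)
The plan is to derive a contradiction by combining the algebraic identity \eqref{eqn:4.1} for the two distinct eigenvalues $\alpha:=\lambda_2=\cdots=\lambda_m$ and $\beta:=\lambda_{m+1}=\cdots=\lambda_n$ of Case~(i) with the values of $e_1(\lambda_i)$, which I shall extract directly from \eqref{eqn:3.9}.

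First, equating the two instances of \eqref{eqn:4.1} for $\lambda_l=\alpha$ and $\lambda_l=\beta$ and factoring out the nonzero $\alpha-\beta$ yields the key identity
\[
2(\alpha^2+\alpha\beta+\beta^2)-3\lambda_1(\alpha+\beta)+\lambda_1^2+2\varepsilon=0. \qquad (\ast)
\]
Next, I will compute $K_{ii,1}^1$ in two ways. On the one hand, \eqref{eqn:3.9} gives $K_{11,1}^1=3\mu$ and $K_{pp,1}^1=\mu$ for $p\ge 2$. On the other hand, substituting the explicit form of $K$ from Lemma~\ref{lm:4.2} into the Cartan-style definition of $\hat\nabla K$, I expect every connection term to be proportional to a vanishing diagonal form $\theta_{ss}(e_1)=0$, so that the computation collapses to $K_{ii,1}^1=e_1(\lambda_i)$. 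Comparing the two expressions will yield
\[
e_1(\lambda_1)=3\mu,\qquad e_1(\lambda_i)=\mu\quad (i\ge 2),
\]
and in particular $e_1(\alpha)=e_1(\beta)=\mu$.

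Differentiating $(\ast)$ along $e_1$ and substituting these values produces, after cancellation, the simple relation $\mu(\alpha+\beta)=0$. Since \eqref{eqn:4.1} guarantees $e_1(\mu)\neq 0$ on $M'$, the function $\mu$ cannot vanish on any open subset, so on the open set where $\mu\neq 0$ we must have $\beta=-\alpha$. Plugging back into $(\ast)$ yields $2\alpha^2+\lambda_1^2+2\varepsilon=0$, which rules out $\varepsilon=1$ and, when $\varepsilon=-1$, reduces to $2\alpha^2+\lambda_1^2=2$. Differentiating this along $e_1$ and using $e_1(\alpha)=\mu$, $e_1(\lambda_1)=3\mu$ together with $\mu\neq 0$ determines $\alpha=-\tfrac{3}{2}\lambda_1$, which forces $\lambda_1^2=\tfrac{4}{11}$; then $\lambda_1$ is locally constant and $3\mu=e_1(\lambda_1)=0$, contradicting $\mu\neq 0$.

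The main technical step is the second paragraph: verifying that, with the specific eigenstructure of Lemma~\ref{lm:4.2}, every off-diagonal contribution to $K_{ii,1}^1$ drops out so as to leave only $e_1(\lambda_i)$. Once that identity is secured, the remainder is a short algebraic chain driven entirely by the two derivative formulas and the single relation $(\ast)$.
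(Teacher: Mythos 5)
Your argument is correct and reaches the contradiction by a genuinely different route from the paper's. The paper extends the Ejiri basis to a local eigenframe $\{E_i\}$ with $E_1=\mathrm{grad}\,\mu/\|\mathrm{grad}\,\mu\|$ and then computes the components $K_{11,i}^i$ and $K_{ii,i}^i$ for $i\ge2$, i.e.\ covariant derivatives in the directions $E_i$ with $i\ge2$ rather than $E_1$; comparing with \eqref{eqn:3.9} gives $\mu=(\tilde\lambda_1-2\tilde\lambda_i)h(\hat\nabla_{E_i}E_1,E_i)$ and $\mu=\tilde\lambda_i h(\hat\nabla_{E_i}E_1,E_i)$, whence $\tilde\lambda_1=3\tilde\lambda_i$ for every $i\ge2$ --- an immediate algebraic contradiction with the presence of two distinct values among $\lambda_2,\dots,\lambda_n$, needing neither your identity $(\ast)$ nor any further differentiation. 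You instead use the $E_1$-derivatives $e_1(\lambda_1)=3\mu$, $e_1(\lambda_i)=\mu$ (these are right: with $K$ as in \eqref{eqn:4.4} every connection term in $K_{ii,1}^1$ is a multiple of $\theta_{ii}(e_1)=0$ or $\theta_{11}(e_1)=0$) together with $(\ast)$, obtained by equating the two expressions \eqref{eqn:4.1} for $e_1(\mu)$; I have checked the subsequent chain $\mu(\alpha+\beta)=0$, then $2\alpha^2+\lambda_1^2+2\varepsilon=0$, then $\lambda_1^2=\tfrac{4}{11}$ constant, hence $3\mu=e_1(\lambda_1)=0$, and it is all correct. (At the stage $\beta=-\alpha=\tfrac32\lambda_1$ you could also stop at once, since $\lambda_1\ge2\beta$ from Lemma \ref{lm:3.4} is already violated.) The paper's version is shorter because the $E_i$-derivatives close up without the eigenvalue equation; yours makes heavier, quantitative use of \eqref{eqn:4.1}.

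One prerequisite you must still supply: the formula $K_{ii,1}^1=e_1(\lambda_i)$ is meaningful only after the pointwise data of Lemmas \ref{lm:3.4} and \ref{lm:4.2} have been promoted to a smooth local eigenframe, so that the $\lambda_i$ are differentiable functions near $p$ and the Cartan formula for $\hat\nabla K$ applies; likewise, differentiating $(\ast)$ requires Case (i) to persist, with the same $m$, on a neighborhood of $p$. This is exactly the setup the paper carries out before its computation: $E_1:=\mathrm{grad}\,\mu/\|\mathrm{grad}\,\mu\|$ is a canonical smooth extension of $e_1$ by \eqref{eqn:4.10}, the eigenvalue multiplicities of $K_{E_1}$ are locally constant by continuity and Lemma \ref{lm:4.1}, and Lemma 1.2 of \cite{S} then yields smooth eigenvector fields along which Lemma \ref{lm:4.2} remains valid. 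With that paragraph inserted at the start, your proof is complete.
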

\begin{proof}
Suppose on the contrary that {\bf Case (i)} does occur. Then, from
\eqref{eqn:3.12} and \eqref{eqn:4.1}, we get
\begin{equation}\label{eqn:4.10}
h({\rm grad}\,\mu,e_1)=e_1(\mu)\neq0,\ h({\rm
grad}\,\mu,e_l)=e_l(\mu)=0,\ \ 2\leq l\leq n.
\end{equation}
It follows that $e_1=\pm\tfrac{{\rm grad}\,\mu}{\|{\rm
grad}\,\mu\|}(p)$. Without loss of generality, we assume that
$e_1=\tfrac{{\rm grad}\,\mu}{\|{\rm grad}\,\mu\|}(p)$.

Now, in a neighborhood $U$ around $p$, we define a unit vector field
$E_1=\tfrac{{\rm grad}\,\mu}{\|{\rm grad}\,\mu\|}$. It is easily
seen from the proof of \eqref{eqn:4.10} that, for each $q\in U$, the
function $f$ should achieve its absolute maximum over $U_qM^n$
exactly at $E_1(q)$. Furthermore, the continuity of eigenvalue
functions of $K_{E_1}$ (cf. \cite{S}) and Lemma \ref{lm:4.1} imply
that the multiplicity of each of its eigenvalue functions is
constant. Then applying Lemma 1.2 of \cite{S} we have a smooth
eigenvector extension of $K_{E_1}$, from $\{e_1,e_2,\ldots,e_n\}$ at
$p$ to $\{E_1(q),E_2(q),\ldots,E_n(q)\}$ at any point $q$ in a
neighborhood of $p$, such that $K_{E_1}E_i=\tilde{\lambda}_iE_i$,
with the functions $\{\tilde{\lambda}_i\}_{i=1}^n$ satisfying
$\tilde{\lambda}_1\ge2\tilde{\lambda}_i$ for $i\ge2$ and
$$
\tilde{\lambda}_{2}=\cdots=\tilde{\lambda}_{m}<
\tilde{\lambda}_{m+1}=\cdots=\tilde{\lambda}_{n},\ \ 2\leq m\leq
n-1.
$$

It is easy to see that, with respect to the local $h$-orthonormal
frame field $\{E_i\}_{i=1}^n$ and the eigenvalue functions
$\{\tilde\lambda_i\}_{i=1}^n$, the foregoing lemmas that from Lemma
\ref{lm:3.4} up to Lemma \ref{lm:4.2} remain valid.

Now, applying Lemma \ref{lm:4.2}, we obtain
\begin{equation}\label{eqn:4.12}
\begin{aligned}
(\hat{\nabla}_{E_i}K)(E_1,E_1)&=\hat{\nabla}_{E_i}K(E_1,E_1)-2K(\hat{\nabla}_{E_i}E_1,E_1)\\
&=\hat{\nabla}_{E_i}\tilde{\lambda}_1E_1-2\sum_{k=2}^n\tilde{\lambda}_kh(\hat{\nabla}_{E_i}E_1,E_k)E_k\\
&=E_i(\tilde{\lambda}_1)E_1+\sum_{k=2}^n(\tilde{\lambda}_1-2\tilde{\lambda}_k)h(\hat{\nabla}_{E_i}E_1,E_k)E_k,\
\ i\geq 2,
\end{aligned}
\end{equation}
and
\begin{equation}\label{eqn:4.13}
\begin{aligned}
(\hat{\nabla}_{E_i}K)(E_i,E_i)&=\hat{\nabla}_{E_i}K(E_i,E_i)-2K(\hat{\nabla}_{E_i}E_i,E_i)\\
&=\hat{\nabla}_{E_i}\tilde{\lambda}_iE_1-2\tilde{\lambda}_ih(\hat{\nabla}_{E_i}E_i,E_1)E_i
-2\tilde{\lambda}_ih(\hat{\nabla}_{E_i}E_i,E_i)E_1\\
&=E_i(\tilde{\lambda}_i)E_1+3\tilde{\lambda}_ih(\hat{\nabla}_{E_i}E_1,E_i)E_i\\
&\hspace{19mm}+\sum_{k\neq
i}\tilde{\lambda}_ih(\hat{\nabla}_{E_i}E_1,E_k)E_k,\ \ i\geq 2.
\end{aligned}
\end{equation}

Then, from \eqref{eqn:3.9}, \eqref{eqn:4.12}, \eqref{eqn:4.13} and
the definition of $K_{ij,l}^k$, we obtain that
\begin{equation}\label{eqn:4.14}
\mu=K_{11,i}^i=(\tilde{\lambda}_1-2\tilde{\lambda}_i)h(\hat{\nabla}_{E_i}E_1,E_i),\
\ 2\le i\le n,
\end{equation}
\begin{equation}\label{eqn:4.15}
3\mu=K_{ii,i}^i=3\tilde{\lambda}_ih(\hat{\nabla}_{E_i}E_1,E_i),\ \
2\le i\le n.
\end{equation}

From \eqref{eqn:4.14}, \eqref{eqn:4.15}, and noting that
$\tilde{\lambda}_1-2\tilde{\lambda}_i\neq 0$ for $2\le i\le n$, we
finally get
\begin{equation}\label{eqn:4.16}
\tilde{\lambda}_1=3\tilde{\lambda}_i,\ \ 2\le i\le n.
\end{equation}

Hence, we have $\tilde{\lambda}_2=\cdots=\tilde{\lambda}_n$. This is
a contradiction to {\bf Case (i)}.
\end{proof}

According to Lemmas \ref{lm:4.1} and \ref{lm:4.3}, we see that if
\eqref{eqn:3.9} holds at every point of $M^n$ with $\mu\neq {\rm
constant}$, then {\bf Case (ii)} should occur at every point of
$M'$. Moreover, we can prove the following lemma.

\begin{lemma}\label{lm:4.4}
If \eqref{eqn:3.9} holds at every point of $M^n$ with $\mu\neq {\rm
constant}$, then there exists a local $h$-orthonormal frame field
$\{E_1,\ldots,E_{n}\}$ and a smooth non-vanishing function $\lambda$
such that the difference tensor $K$ takes the following form:
\begin{equation}\label{eqn:4.17}
K_{E_1}E_1=3\lambda E_1,\ K_{E_1}E_i=\lambda E_i,\
K_{E_i}E_j=\lambda\delta_{ij}E_1,\ \ i, j=2,\ldots,n.
\end{equation}
\end{lemma}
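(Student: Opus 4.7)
The plan is to combine Lemma \ref{lm:4.3} (which rules out Case (i)) with Lemma \ref{lm:4.2} (which gives the pointwise form of $K$ in Case (ii)) and then to repeat the smooth-extension and covariant-derivative argument already carried out in the proof of Lemma \ref{lm:4.3}. Since the statement is local, I work on the open dense subset $M'$; the only new information to squeeze out is the sharper relation $\lambda_1=3\lambda$, which is precisely the identity (4.16) from the proof of Lemma \ref{lm:4.3}.

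Fix $p\in M'$. Because Case (i) is excluded, we are necessarily in Case (ii), so $\lambda_2=\cdots=\lambda_n=:\lambda$ at $p$ while $\lambda_1>\lambda$ is a simple eigenvalue of $K_{e_1}$ by (4.2). Lemma \ref{lm:4.2} in Case (ii) then yields the pointwise identities
\[
K_{e_1}e_1=\lambda_1 e_1,\quad K_{e_1}e_i=\lambda e_i,\quad K_{e_i}e_j=\lambda\delta_{ij}e_1,\qquad i,j=2,\ldots,n.
\]
To make this a smooth statement, I would observe, just as in the proof of Lemma \ref{lm:4.3}, that (3.12) and (4.1) force $\operatorname{grad}\mu(p)$ to be parallel to $e_1$, so $E_1:=\operatorname{grad}\mu/\|\operatorname{grad}\mu\|$ is a smooth unit vector field on a neighborhood of $p$ in $M'$ which coincides with the maximizer of $f$ at every point. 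Since the multiplicities of the two eigenvalue functions of $K_{E_1}$ are constant on $M'$ (being $1$ and $n-1$), Lemma 1.2 of \cite{S} provides a smooth extension $\{E_1,\ldots,E_n\}$ of $\{e_1,\ldots,e_n\}$ with $K_{E_1}E_1=\tilde{\lambda}_1 E_1$ and $K_{E_1}E_i=\tilde{\lambda}E_i$ for $i\ge2$; the remaining relation $K_{E_i}E_j=\tilde{\lambda}\delta_{ij}E_1$ then holds on the whole neighborhood by continuity.

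To obtain $\tilde{\lambda}_1=3\tilde{\lambda}$, I would redo the computation (4.12)--(4.15) verbatim, noting that it used only the property that all eigenvalues of $K_{E_1}$ other than $\tilde{\lambda}_1$ coincide, which is exactly our situation. Expanding $(\hat{\nabla}_{E_i}K)(E_1,E_1)$ and $(\hat{\nabla}_{E_i}K)(E_i,E_i)$ by the Leibniz rule, identifying the coefficients of $E_i$ with $K_{11,i}^i$ and $K_{ii,i}^i$, and using (3.9) to read off $K_{11,i}^i=\mu$ and $K_{ii,i}^i=3\mu$, gives
\[
\mu=(\tilde{\lambda}_1-2\tilde{\lambda})h(\hat{\nabla}_{E_i}E_1,E_i),\qquad 3\mu=3\tilde{\lambda}\,h(\hat{\nabla}_{E_i}E_1,E_i).
\]
Since $\tilde{\lambda}_1\neq 2\tilde{\lambda}$ by (4.2), eliminating $h(\hat{\nabla}_{E_i}E_1,E_i)$ yields $\tilde{\lambda}_1=3\tilde{\lambda}$. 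Setting $\lambda:=\tilde{\lambda}$ then produces (4.17), and the non-vanishing of $\lambda$ follows immediately from $\lambda_1>0$ (by (4.2)) together with $\lambda_1=3\lambda$, which actually gives $\lambda>0$.

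I do not anticipate a genuine obstacle. The argument is formally identical to, and slightly simpler than, the proof of Lemma \ref{lm:4.3}, which already handled the more delicate Case (i) and derived (4.16). The only mildly subtle point is the smooth eigenbasis extension via Singley's lemma, but this is exactly the mechanism described in the previous proof and applies here without change, the multiplicity structure being even more rigid.
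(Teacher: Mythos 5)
Your proposal is correct and follows essentially the same route as the paper: defining $E_1=\operatorname{grad}\mu/\|\operatorname{grad}\mu\|$, extending the eigenbasis smoothly via Lemma 1.2 of \cite{S} using the constant multiplicities $1$ and $n-1$, and then rerunning the computation \eqref{eqn:4.12}--\eqref{eqn:4.16} to get $\tilde{\lambda}_1=3\tilde{\lambda}$. Your added observation that $\lambda>0$ follows from $\lambda_1>0$ and $\lambda_1=3\lambda$ correctly supplies the non-vanishing claim that the paper leaves implicit.
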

\begin{proof}
First of all, we see that \eqref{eqn:4.10} still holds, and without
loss of generality we may assume that $e_1=\tfrac{{\rm
grad}\,\mu}{\|{\rm grad}\,\mu\|}(p)$. Now we define $E_1=\tfrac{{\rm
grad}\,\mu}{\|{\rm grad}\,\mu\|}$. Similar as in the proof of Lemma
\ref{lm:4.3}, for each point $q$ in a neighborhood $U$ of $p$, the
function $f$ should achieve its absolute maximum over $U_qM^n$
exactly at $E_1(q)$. Moreover, due to that $K_{E_1}(q)$ has exactly
two distinct eigenvalues with multiplicities $1$ and $n-1$,
respectively, we can apply Lemma 1.2 of \cite{S} again to obtain
local orthonormal eigenvector fields of $K_{E_1}$, extending from
$\{e_1,e_2,\ldots,e_n\}$ at $p$ to $\{E_1,E_2,\ldots,E_n\}$ around
$p$, such that $K_{E_1}E_i=\tilde{\lambda}_iE_i$, with the
eigenvalue functions $\{\tilde{\lambda}_i\}_{i=1}^n$ satisfy
$\tilde{\lambda}_{2}=\cdots=\tilde{\lambda}_n$.

It is easily seen that, with respect to $\{E_i\}_{i=1}^n$ and
$\{\tilde\lambda_i\}_{i=1}^n$, the foregoing lemmas, from Lemma
\ref{lm:3.4} up to Lemma \ref{lm:4.2}, and that the equations from
\eqref{eqn:4.12} up to \eqref{eqn:4.16}, are still valid. Hence, we
have $\tilde\lambda_1=3\tilde\lambda_i$ for $i\ge2$.

This completes the proof of Lemma \ref{lm:4.4}.
\end{proof}

\vskip 2mm


\numberwithin{equation}{section}
\subsection{\eqref{eqn:3.9} holds with $\mu={\rm
constant}$}\label{sect:4.2}

In this subsection, we consider $n$-dimensional locally strongly
convex centroaffine hypersurfaces such that \eqref{eqn:3.9} holds
identically with $\mu={\rm constant}$. The following Proposition is
the main result of this subsection.

\begin{proposition}\label{pr:5.1}
Let $x: M^n\rightarrow \mathbb{R}^{n+1}$ be a locally strongly
convex centroaffine hypersurface. If \eqref{eqn:3.9} holds at every
point of $M^n$ with $\mu={\rm constant}$, then $\mu=0$ and $M^n$ is
of parallel cubic form.
\end{proposition}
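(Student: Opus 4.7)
The plan is a proof by contradiction: assume $\mu\ne 0$ (and constant) and derive a contradiction. Set $c:=\tfrac{n+2}{n}\mu\ne 0$; Lemma~\ref{lm:3.2} immediately gives $\hat\nabla T = c\cdot \mathrm{id}$. Two consequences will be used throughout: (a) $T\not\equiv 0$, so that $W:=\{p\in M^n:T(p)\ne 0\}$ is a nonempty open dense subset; (b) by the torsion-freeness of $\hat\nabla$,
\begin{equation*}
\hat R(X,Y)T = \hat\nabla_X(cY)-\hat\nabla_Y(cX)-c[X,Y] = 0
\end{equation*}
for every $X,Y\in TM^n$.

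Fix $p\in W$ and work with the orthonormal basis $\{e_1,\ldots,e_n\}$ furnished by Lemma~\ref{lm:3.4}. Since $\mu$ is constant, $e_i(\mu)=0$ for all $i$, and Lemma~\ref{lm:3.5} collapses to $(2\lambda_l-\lambda_1)(\lambda_l^2-\lambda_1\lambda_l+\varepsilon)=0$ for $2\le l\le n$ (together with \eqref{eqn:3.14} and \eqref{eqn:3.15}). Expanding $\hat R(e_1,e_j)T=0$ by means of the Gauss equation \eqref{eqn:2.3}, and using $K_{e_1}e_i=\lambda_ie_i$ together with $K^1_{ij}=\lambda_i\delta_{ij}$ (the latter from total symmetry of $K$), the $e_1$-component yields $T^j(\lambda_j^2-\lambda_1\lambda_j+\varepsilon)=0$, and the $e_j$-component (in the case $T=T^1e_1$) yields the same quadratic identity. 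Combining these with the further identities $\hat R(e_j,e_l)T=0$ for $j,l\ge 2$ to rule out configurations in which $T$ has mixed $\{e_j\}_{j\ge 2}$-components, one obtains $\lambda_j^2-\lambda_1\lambda_j+\varepsilon=0$ for every $j\ge 2$. Both roots of this quadratic cannot simultaneously satisfy $\lambda_j\le\lambda_1/2$ unless they coincide, so $\lambda_2=\cdots=\lambda_n=:\lambda$; the trace identity $\sum_i K_{e_i}e_i=nT$ then forces $T\parallel e_1$ with $|T|=\tfrac{\lambda_1+(n-1)\lambda}{n}$.

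This pointwise picture extends to a neighborhood $V\subset W$ of $p$ by setting $E_1:=T/|T|$ and applying Lemma~1.2 of \cite{S} to obtain smooth eigenvector fields $E_2,\ldots,E_n$ of $K_{E_1}$, recovering the setup of Lemma~\ref{lm:4.4}. Since the computations \eqref{eqn:4.12}--\eqref{eqn:4.15} depend only on the form \eqref{eqn:3.9} and on the eigenvector decomposition of $K_{E_1}$, they go through verbatim and deliver $\tilde\lambda_1=3\tilde\lambda_i$, i.e.\ $\lambda_1=3\lambda$. Substituting $\lambda_l=\lambda$ and $\lambda_1=3\lambda$ into $(2\lambda_l-\lambda_1)(\lambda_l^2-\lambda_1\lambda_l+\varepsilon)=0$ gives $\lambda(2\lambda^2-\varepsilon)=0$. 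If $\lambda=0$, then $\lambda_1=0$ and hence $K_{e_1}=0$, forcing $T(p)=0$ and contradicting $p\in W$; if $\lambda^2=\varepsilon/2$, then $\lambda$ is locally constant, so $|T|=\tfrac{n+2}{n}\lambda$ is constant on $V$, contradicting $\hat\nabla|T|^2=2cT\ne 0$ on $V$. In either case a contradiction; therefore $\mu=0$, and \eqref{eqn:3.9} becomes $\hat\nabla K=0$, i.e., $M^n$ has parallel cubic form.

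The main obstacle is the derivation of $\lambda_j^2-\lambda_1\lambda_j+\varepsilon=0$ for \emph{every} $j\ge 2$: in the non-constant-$\mu$ setting this factor was automatically nonzero (cf.\ Lemma~\ref{lm:4.1}), and the Lemma~\ref{lm:4.2} structural form of $K$ then followed immediately from \eqref{eqn:3.15}; here, by contrast, both factors of the collapsed Lemma~\ref{lm:3.5} equation may vanish individually, so one must carefully combine all the identities $\hat R(e_i,e_j)T=0$ with Lemma~\ref{lm:3.5} to exclude mixed eigenvalue configurations before the Lemma~\ref{lm:4.4}-type computation can be applied.
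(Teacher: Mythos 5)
Your strategy (contradiction via $\hat\nabla T=c\cdot\mathrm{id}$, $\hat R(X,Y)T=0$, and the Gauss equation) is genuinely different from the paper's, but as written it has two gaps, one of which is fatal. The fatal one is the claim that the computations \eqref{eqn:4.12}--\eqref{eqn:4.15} ``go through verbatim'' once you have the eigenvector decomposition of $K_{E_1}$. They do not: \eqref{eqn:4.13} uses $K(E_i,E_i)=\tilde\lambda_iE_1$ and $K(E_k,E_i)=\tilde\lambda_i\delta_{ki}E_1$ for $i,k\ge2$, i.e.\ the full structural form \eqref{eqn:4.17} of Lemma \ref{lm:4.2}/\ref{lm:4.4}, not merely $K_{E_1}E_i=\tilde\lambda_iE_i$. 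In Section \ref{sect:4.1} that form was extracted from \eqref{eqn:3.14} and \eqref{eqn:3.15} precisely because the non-vanishing conditions \eqref{eqn:4.2} held (a consequence of $e_1(\mu)\neq0$). In your setting $e_1(\mu)=0$, and you have in fact derived the \emph{opposite} of \eqref{eqn:4.2}, namely $\lambda_l^2-\lambda_1\lambda_l+\varepsilon=0$; then \eqref{eqn:3.15} says nothing about $K_{ll}^l$, and \eqref{eqn:3.14} becomes vacuous once $\lambda_2=\cdots=\lambda_n$. So the components $h(K_{e_i}e_j,e_k)$ for $i,j,k\ge2$ are uncontrolled, the identity $K_{ii,i}^i=3\tilde\lambda_ih(\hat\nabla_{E_i}E_1,E_i)$ of \eqref{eqn:4.15} is not available, and the conclusion $\lambda_1=3\lambda$ does not follow. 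The secondary gap is the step you yourself flag as ``the main obstacle'': ruling out mixed configurations (some $\lambda_j$ satisfying $2\lambda_j=\lambda_1$ with $T^j=0$, others satisfying the quadratic) is only asserted, not carried out; the identity $T^j(\lambda_j^2-\lambda_1\lambda_j+\varepsilon)=0$ gives a disjunction at each index, and the case analysis needed to collapse it is nontrivial.

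For comparison, the paper's proof avoids all of this. It parallel-transports the eigenbasis along the geodesic $\gamma$ in the $e_1$-direction; \eqref{eqn:3.9} shows the eigenvector decomposition of $K_{E_1}$ persists along $\gamma$ with eigenvalues evolving linearly, $\tilde\lambda_1'=3\mu$ and $\tilde\lambda_i'=\mu$ (see \eqref{eqn:5.8}). The identity $(2\tilde\lambda_i-\tilde\lambda_1)(\tilde\lambda_i^2-\tilde\lambda_1\tilde\lambda_i+\varepsilon)=0$ — which, like your quadratic, only needs the eigenstructure of $K_{E_1}$ — must then hold identically in the arclength parameter $s$ as a cubic polynomial with leading coefficient $2\mu^3$, whence $12\mu^3=0$ and $\mu=0$. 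This sidesteps both the structure of $K$ on $\{e_2,\ldots,e_n\}$ and the mixed-eigenvalue case analysis. If you want to complete your route, you would need an independent derivation of \eqref{eqn:4.17} in the constant-$\mu$ case (e.g.\ by exploiting further index choices in \eqref{eqn:3.18}), which is substantial additional work.
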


\begin{proof}
We first fix a point $p\in M^n$, and then we choose an orthonormal
basis $\{e_i\}_{i=1}^n$ as in Lemma \ref{lm:3.4} such that
\begin{equation}\label{eqn:5.1}
K_{e_1}e_i=\lambda_i e_i,\ \ i=1,\ldots,n.
\end{equation}
We take a geodesic $\gamma(s)$ passing through $p$ in the direction
of $e_1$. Let $\{E_1,\ldots, E_n\}$ be parallel vector fields along
$\gamma$, such that $E_i(p)=e_i,\ 1\le i\le n$, and
$E_1=\gamma'(s)$. Then we have $h(E_i,E_j)=h(e_i,e_j)=\delta_{ij}$
for $1\le i,j\le n$.

Applying \eqref{eqn:3.9}, we get that
\begin{equation}\label{eqn:5.3}
\tfrac{\partial}{\partial s}h(K(E_1,E_1),E_i,)
=h((\hat{\nabla}_{E_1} K)(E_1,E_1),E_i,)=0,\ \ 2\le i\le n,
\end{equation}
\begin{equation}\label{eqn:5.4}
\tfrac{\partial}{\partial s}h(K(E_1,E_i),E_j) =h((\hat{\nabla}_{E_1}
K)(E_1,E_i),E_j)=0,\ \ 2\le i\neq j\le n.
\end{equation}
Then we have
\begin{equation}\label{eqn:5.5}
\left\{
\begin{aligned}
& h(K(E_1,E_1),E_i)=h(K(e_1,e_1),e_i)=0,\\[1mm]
&h(K(E_1,E_i),E_j)=h(K(e_1,e_i),e_j)=0,
\end{aligned}
\right.\ \ 2\le i\neq j\le n.
\end{equation}
It follows that there exist functions $\tilde{\lambda}_i\ (1\le i\le
n)$ defined along $\gamma$, such that
\begin{equation}\label{eqn:5.6}
K_{E_1}E_i=\tilde{\lambda}_i E_i, \ \tilde{\lambda}_i(p)=\lambda_i,\
\ i=1,\ldots,n.
\end{equation}

Now, due to \eqref{eqn:5.6} and that $\mu={\rm constant}$, we can
follow the proof of \eqref{eqn:3.13} to show that, along $\gamma$,
\begin{equation}\label{eqn:5.7}
(2\tilde{\lambda}_i-\tilde{\lambda}_1)(\tilde{\lambda}_i^2
-\tilde{\lambda}_1\tilde{\lambda}_i +\varepsilon)=0,\ \ 2\le i\le n.
\end{equation}

Applying \eqref{eqn:3.9} again, we get
\begin{equation}\label{eqn:5.8}
\left\{
\begin{aligned}
& \tfrac{\partial}{\partial s}\tilde{\lambda}_1=\tfrac{\partial}{\partial s}h(K(E_1,E_1),E_1)
=h((\hat{\nabla}_{E_1} K)(E_1,E_1),E_1)=3\mu,\\[1mm]
&\tfrac{\partial}{\partial
s}\tilde{\lambda}_i=\tfrac{\partial}{\partial s}h(K(E_1,E_i),E_i)
=h((\hat{\nabla}_{E_1} K)(E_1,E_i),E_i)=\mu,\ \ 2\le i\le n.
\end{aligned}
\right.
\end{equation}

By \eqref{eqn:5.8}, taking the derivative of \eqref{eqn:5.7} three
times along $\gamma(s)$ implies that
\begin{equation}\label{eqn:5.9}
12\mu^3=0.
\end{equation}
This combining with \eqref{eqn:3.9} clearly implies that $M^n$ has
parallel cubic form.
\end{proof}

\vskip 2mm

\numberwithin{equation}{section}

\subsection{Completion of the proof of Theorem
\ref{thm:1.1}}\label{sect:4.3}

As we have already stated in the beginning of Section \ref{sect:4},
to prove Theorem \ref{thm:1.1}, we are left to consider the case
that \eqref{eqn:3.9} holds identically for some function $\mu$ on
$M^n$. Now, we should consider two cases: $\mu\neq {\rm constant}$,
or $\mu= {\rm constant}$.

(1) If $\mu\neq {\rm constant}$, then we can apply Lemma
\ref{lm:4.4} to obtain that
$$
T^1=\tfrac{n+2}n\lambda\not=0,\ \ T^2=\cdots=T^n=0,
$$
which, by \eqref{eqn:4.17}, further implies that
$K_{ij}^k=\tfrac{n}{n+2}(T^k \delta_{ij} +T^i \delta_{jk}+T^j
\delta_{ik})$. This implies that $\tilde{K}=0$. According to
subsection 7.1.1 of \cite{SSV}, and also Lemma 2.1 of \cite{LLSSW}
and noting that $K\neq0$, we easily see that locally $M^n$ is a
hyperquadric, which either has no center, or is not centered at the
origin.

(2) If $\mu= {\rm constant}$, then by Proposition \ref{pr:5.1},
$M^n$ is of parallel cubic form. It follows that we can apply the
(classification) Theorem 1.1 of \cite{CHM} to see that locally $M^n$
is either a hyperquadric with the origin as its center (i.e. $K=0$),
or one of the hypersurfaces as stated from (ii) up to (viii) of
Theorem \ref{thm:1.1}.

We have completed the proof of Theorem \ref{thm:1.1}.\qed

\vskip 1cm


\vskip 3mm

\end{document}